 \def\NZQ{\mathbb}               
 \def\QQ{{\NZQ Q}}
 \def\ZZ{{\NZQ Z}}
 \def\G{{\mathcal G}}
 \def\Bc{{\mathcal B}}
 \def\P{{\mathcal P}}
  \def\Pc{{\mathcal P}}
 \def\ab{{\mathbf a}}
 \def\bb{{\mathbf b}}
 \def\xb{{\mathbf x}}
 \def\cb{{\mathbf c}}
 \def\db{{\mathbf d}}
 \def\eb{{\mathbf e}}
 \def\vb{{\mathbf v}}
 \def\wb{{\mathbf w}}
 \def\eb{{\mathbf e}}
 \def\opn#1#2{\def#1{\operatorname{#2}}} 
 \opn\chara{char} \opn\length{\ell} \opn\pd{pd} \opn\rk{rk}
 \opn\projdim{proj\,dim} \opn\injdim{inj\,dim} \opn\rank{rank}
 \opn\depth{depth} \opn\grade{grade} \opn\height{height}
 \opn\embdim{emb\,dim} \opn\codim{codim}
 \opn\Tr{Tr} \opn\bigrank{big\,rank}
 \opn\superheight{superheight}\opn\lcm{lcm}
 \opn\trdeg{tr\,deg}
 \opn\reg{reg} \opn\lreg{lreg} \opn\ini{in} \opn\lpd{lpd}
 \opn\size{size} \opn\sdepth{sdepth}
 \opn\link{link}\opn\fdepth{fdepth}\opn\lex{lex}
 \opn\tr{tr}
 \opn\type{type}
 \opn\gap{gap}
 \opn\diam{diam}
 \opn\Mod{Mod}
 \opn\Jac{Jac}
 \opn\bigheight{bigheight}
 \opn\div{div} \opn\Div{Div} \opn\cl{cl} \opn\Cl{Cl}
 \opn\Spec{Spec} \opn\Supp{Supp} \opn\supp{supp} \opn\Sing{Sing}
 \opn\Ass{Ass} \opn\Min{Min}\opn\Mon{Mon}
 \opn\Ann{Ann} \opn\Rad{Rad} \opn\Soc{Soc}
 \opn\Im{Im} \opn\Ker{Ker} \opn\Coker{Coker} \opn\Am{Am}
 \opn\Hom{Hom} \opn\Tor{Tor} \opn\Ext{Ext} \opn\End{End}
 \opn\Aut{Aut} \opn\id{id}
 \opn\nat{nat}
 \opn\pff{pf}
 \opn\Pf{Pf} \opn\GL{GL} \opn\SL{SL} \opn\mod{mod} \opn\ord{ord}
 \opn\Gin{Gin} \opn\Hilb{Hilb}\opn\sort{sort}
 \opn\PF{PF}\opn\Ap{Ap}
 \opn\dist{dist}
 \opn\aff{aff}
 \opn\relint{relint} \opn\st{st}
 \opn\lk{lk} \opn\cn{cn} \opn\core{core} \opn\vol{vol}  \opn\inp{inp} \opn\nilpot{nilpot}
 \opn\link{link} \opn\star{star}\opn\lex{lex}\opn\set{set}
 \opn\width{wd}
 \opn\Fr{F}
 \opn\QF{QF}
 \opn\G{G}
 \opn\type{type}\opn\res{res}
 \opn\conv{conv}
 \opn\gr{gr}
 \def\pot#1#2{#1[\kern-0.28ex[#2]\kern-0.28ex]}
 \opn\dirlim{\underrightarrow{\lim}}
 \opn\inivlim{\underleftarrow{\lim}}
 \let\sect=\cap
 \let\iso=\cong
 \let\Union=\bigcup
 \def\ab{{\mathbf a}}
 \def\bb{{\mathbf b}}
 \def\xb{{\mathbf x}}
 \def\cb{{\mathbf c}}
 \def\db{{\mathbf d}}
 \def\eb{{\mathbf e}}
 \def\Implies{\ifmmode\Longrightarrow \else
         \unskip${}\Longrightarrow{}$\ignorespaces\fi}
 \def\implies{\ifmmode\Rightarrow \else
         \unskip${}\Rightarrow{}$\ignorespaces\fi}
 \def\iff{\ifmmode\Longleftrightarrow \else
         \unskip${}\Longleftrightarrow{}$\ignorespaces\fi}
 \newtheorem{Theorem}{Theorem}[section]
 \newtheorem{Lemma}[Theorem]{Lemma}
 \let\epsilon\varepsilon
 \let\kappa=\varkappa
 \opn\dis{dis}
 \def\pnt{{\raise0.5mm\hbox{\large\bf.}}}
 \opn\Lex{Lex}
\begin{document}
\title {Binomial  ideals  attached to finite collections of cells}

\author {J\"urgen Herzog}
\address{J\"urgen Herzog, Fachbereich Mathematik, Universit\"at Duisburg-Essen, Campus Essen, 45117
Essen, Germany} \email{juergen.herzog@uni-essen.de}

\author {Takayuki Hibi}
\address{Takayuki Hibi, Department of Pure and Applied Mathematics,
Graduate School of Information Science and Technology, Osaka
University, Suita, Osaka 565-0871, Japan}
\email{hibi@math.sci.osaka-u.ac.jp}

\author {Somayeh Moradi}
\address{Somayeh Moradi, Department of Mathematics, School of Science, Ilam University,
P.O.Box 69315-516, Ilam, Iran}
\email{so.moradi@ilam.ac.ir}

\thanks{The second author was partially supported by JSPS KAKENHI 19H00637. The third author is supported by the Alexander von Humboldt Foundation}

\begin{abstract}
We consider the ideal of inner $2$-minors $I_\Pc$ of a finite set of cells $\Pc$, which we call the cell ideal of $\Pc$. A nice interpretation for the height of an unmixed ideal $I_\Pc$, in terms of the number of cells of $\Pc$ is given.
Moreover, the coordinate rings of cell ideals with isolated singularities are determined.
\end{abstract}


\subjclass[2010]{13F20, 05E40.}

\keywords{cell ideal, height, isolated singularity}

\maketitle

\setcounter{tocdepth}{1}

\section*{Introduction}

Combinatorial descriptions of  height of polyomino ideals have been studied in several works. 
Qureshi~\cite{Q} proved that  for a convex polyominoe $\Pc$ the height of the polyomino ideal $I_\Pc$ is the number of cells of $\Pc$. Herzog and Madani [14] extended this result to simple
polyominoes, which by definition are the polyominoes with no holes, see \cite{HM} and \cite{ASS}. Such polyomino ideals are in particular prime. However, not all polyomino ideals are prime ideals and it is still an open  problem to identify the polyominoes $\Pc$ for which $I_\Pc$  is a prime ideal. 
In \cite{DN} the same description for height in terms of the number of cells of $\Pc$ was proved for
closed path polyominoes. 
In this paper we consider more generally cell ideals, i.e., ideals of inner $2$-minors which are attached to finite collections of cells. When any two cells of $\Pc$
are connected in $\Pc$, this ideal is just the polyomino ideal. 
In Theorem~\ref{heightprime} it is shown that $\height I_\Pc\leq c\leq \bigheight I_\Pc$,  where $c$ is the number of cells of $\Pc$. In particular, if $I_\Pc$ is an unmixed ideal, then $\height I_\Pc= c$. To this aim we use Lemma~\ref{linearalgebra} 
which determines the height of an unmixed binomial ideal $I\subset S$ in terms of the
dimension of the $\QQ$-vector space spanned by the set of integer vectors $\{\vb-\wb\in \QQ^n\:\;
\xb^\vb-\xb^\wb\in I\}$.

In the next section of this paper it is shown that when $K$ is a perfect field, and $\Pc$ is a finite set of  cells such that $I_\Pc\subset S$  is a prime ideal, then the ring $S/I_\Pc$ has an isolated singularity if and only if  $\Pc$ is an inner interval.

\section{On the height of cell ideals}


Consider $(\ZZ^2,\leq )$ as a partially ordered set with $(i,j)\leq (i',j')$ if  $i\leq i'$ and $j\leq j'$. Let $\ab,\bb\in \ZZ^2$. Then the set $[\ab,\bb]=\{\cb\in \ZZ^2: \ab\leq \cb\leq \bb\}$ is called an \textit{interval}. The interval with $\ab =(i,j)$  and  $\bb = (i',j')$ is called \textit{proper}, if $i<i'$ and $j<j'$.
A \textit{cell} is an interval of the form $[\ab,\bb]$, where $\bb=\ab+(1,1)$. The cell $C=[\ab,\ab+(1,1)]$ consists of the elements  $\ab, \ab+(0,1),\ab+(1,0)$ and $\ab+(1,1)$, which  are called the \emph{vertices} of $C$.
 We denote the set of vertices of $C$ by $V(C)$. The intervals $[\ab,\ab+(1,0)]$, $[\ab+(1,0),\ab+(1,1)]$, $[\ab+(0,1),\ab+(1,1)]$ and $[\ab,\ab+(0,1)]$ are called the \emph{edges}  of $C$, denoted $E(C)$. Each edge consists of two elements, called the \emph{corners of the edge}.  Let $[\ab, \bb]$ be a proper interval in $\ZZ^2$.  A cell $C = [\ab', \bb']$ of $\ZZ^2$ is called a cell of $[\ab, \bb]$ if $\ab \leq \ab'$ and $\bb' \leq \bb$.
Let $\Pc$ be a finite collection of cells of $\ZZ^2$.  The vertex set of $\Pc$ is $V(\Pc) = \Union_{C \in \Pc} V(C)$ and the edge set of $\Pc$ is $E(\Pc) = \Union_{C \in \Pc} E(C)$. Let $C$ and $D$ be two cells of $\Pc$.  Then $C$ and $D$ are {\em connected} in $\Pc$,  if there is a sequence of cells of $\Pc$ of the form $C = C_1, \ldots, C_m = D$ for which $C_i \sect C_{i+1}$ is an edge of $C_i$ for $i = 1, \ldots, m - 1$.

A {\em polyomino} is a finite collection $\Pc$ of cells of $\ZZ^2$ for which any two cells of $\Pc$ are connected in $\Pc$.
Let $\Pc$ be a polyomino,  and let $S = K[\{x_\ab\}_{\ab \in V(\Pc)}]$ be the polynomial ring in $|V(\Pc)|$ variables over a field $K$.  A proper interval $[\ab, \bb]$ of $\ZZ^2$ is called an {\em inner interval} of $\Pc$,  if each cell of $[\ab, \bb]$ belongs to $\Pc$.  Now, for each inner interval $[\ab, \bb]$ of $\Pc$, one introduces the binomial $f_{\ab,\bb} = x_{\ab}x_{\bb}-x_{\cb}x_{\db}$, where $\cb$ and $\db$ are the anti-diagonal corners of $[\ab, \bb]$.   The binomial $f_{\ab,\bb}$ is called an {\em inner $2$-minor} of $\Pc$.  The {\em polyomino ideal} of $\Pc$ is the binomial ideal $I_\Pc$ which is generated by the inner $2$-minors of $\Pc$.  Furthermore, we write $K[\Pc]$ for the quotient ring $S/I_\Pc$.

\medskip
The main result of this section is

\begin{Theorem}
	\label{heightprime}
	Let $\Pc$ be a finite set of cells, and let $c$ be the number of cells of $\Pc$. Then $\height I_\Pc\leq c\leq \bigheight I_\Pc$. In particular, if $I_\Pc$ is an unmixed ideal, then $\height I_\Pc= c$.
\end{Theorem}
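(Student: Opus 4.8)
The plan is to apply Theorem~\ref{linearalgebra} to $I=I_\Pc$, which reduces everything to the single claim $\dim_\QQ V_{I_\Pc}=c$: once this is established, the chain $\height I_\Pc\le \dim_\QQ V_{I_\Pc}\le \bigheight I_\Pc$ becomes $\height I_\Pc\le c\le \bigheight I_\Pc$, and the unmixed case gives the stated equality. To compute $V_{I_\Pc}$, to each inner interval $[\ab,\bb]$ with anti-diagonal corners $\cb,\db$ I would attach the integer vector $\eb_\ab+\eb_\bb-\eb_\cb-\eb_\db$ in $\QQ^{V(\Pc)}$, where $\eb_\ab$ denotes the basis vector indexed by the variable $x_\ab$; for a single cell $C=[\ab,\ab+(1,1)]$ write $\vb_C=\eb_\ab+\eb_{\ab+(1,1)}-\eb_{\ab+(1,0)}-\eb_{\ab+(0,1)}$. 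Since the cell $2$-minors are themselves among the generators of $I_\Pc$, each $\vb_C$ lies in $V_{I_\Pc}$, so it suffices to prove that $V_{I_\Pc}$ is spanned by the $c$ vectors $\vb_C$ and that these are linearly independent.

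For the spanning statement I would first recall from the proof of Theorem~\ref{linearalgebra} (and the underlying lattice-ideal theory) that the vectors defining $V_{I_\Pc}$ form the group $G=\{\vb : 1-\xb^\vb\in I_\Pc S_\xb\}$, and that $G$ is generated by the vectors attached to any binomial generating set of $I_\Pc$; hence $V_{I_\Pc}$ is the $\QQ$-span of the vectors of the inner $2$-minors. It then remains to observe that the vector of an inner interval telescopes into cell vectors: a direct computation of the coefficient at each vertex shows
\[
\eb_\ab+\eb_\bb-\eb_\cb-\eb_\db=\sum_{C\subseteq[\ab,\bb]}\vb_C,
\]
the sum running over the cells $C$ contained in $[\ab,\bb]$, all of which belong to $\Pc$ by the definition of inner interval. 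Consequently $V_{I_\Pc}$ is spanned by $\{\vb_C : C\in\Pc\}$, which already gives $\dim_\QQ V_{I_\Pc}\le c$.

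For the reverse inequality I would prove the $\vb_C$ independent by a triangularity argument. Order the cells $C_1,\dots,C_c$ so that their lower-left corners $\ab_1,\dots,\ab_c$ increase lexicographically, and form the $c\times c$ matrix $M$ whose $(j,k)$ entry is the coefficient of $\eb_{\ab_k}$ in $\vb_{C_j}$. A vertex $\ab_k$ is a corner of a cell $C_j\in\Pc$ only if $C_j$ is the cell with lower-left corner $\ab_k$ (forcing $j=k$, with coefficient $+1$) or one of the cells whose lower-left corner is $\ab_k-(1,0)$, $\ab_k-(0,1)$, or $\ab_k-(1,1)$, each lexicographically smaller and hence indexed by some $j<k$. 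Thus $M$ is upper triangular with all diagonal entries $1$, so $\det M=1$ and the $\vb_C$ are linearly independent, yielding $\dim_\QQ V_{I_\Pc}=c$ and finishing the proof. The hard part will be the spanning step, that is, the reduction from arbitrary binomials of $I_\Pc$ first to the inner $2$-minors and then, via the telescoping identity, to the cell vectors, since it rests on the lattice/group structure behind Theorem~\ref{linearalgebra}; the independence, by contrast, is handled cleanly by the triangular matrix above.
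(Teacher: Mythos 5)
Your proposal is correct and takes essentially the same route as the paper: both reduce via Theorem~\ref{linearalgebra} to showing $\dim_\QQ V_{I_\Pc}=c$, establish spanning by the same telescoping identity $\vb_{[\ab,\bb]}=\sum_{C\subseteq[\ab,\bb]}\vb_C$ over the cells of an inner interval, and establish independence by a triangularity argument (the paper reads off pairwise distinct leading vectors $\vb_\bb$ at upper-right corners under a linear extension of the componentwise order, which is the same idea as your lex-ordered lower-left-corner matrix with unit diagonal). Your extra care in justifying that $V_{I_\Pc}$ is spanned by the vectors of a binomial generating set, via the lattice structure from the proof of Theorem~\ref{linearalgebra}, is sound and in fact makes explicit a step the paper passes over with \emph{``it follows from the definition of $V_{I_\Pc}$.''}
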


For the proof of this theorem we use
the following lemma. 
We refer to $\bigheight I$ as to the maximal height of an associated prime ideal of $I$. An ideal $I$ is called {\em unmixed} if all $P\in \Ass(I)$ have the same height.

\begin{Lemma}
	\label{linearalgebra}
	Let $I\subset S$ be a binomial ideal,  and let $V_I$ be the $\QQ$-vector space spanned by the set of integer vectors $\{\vb-\wb\in \QQ^n\:\;
	\xb^\vb-\xb^\wb\in I\}$. Then
	\[
	\height I\leq \dim_Q V_I \leq \bigheight I.
	\]
	In particular, $\height I= \dim_Q V_I$,  if $I$ is unmixed.
\end{Lemma}

\begin{proof}
	Let $\xb=x_1\cdots x_n$. Then $S_\xb=K[x_1^\pm,\ldots, x_n^\pm]$ is the Laurent polynomial ring, and  we have $\height I\leq \height IS_\xb$. Hence, for the first inequality it suffices to show that $\height IS_\xb\leq \dim_\QQ V_I$.
	
	Note that
	\[
	IS_\xb =(1-\xb^{\vb}\:\; \vb\in V_I).
	\]
	We observe that
	\[
	(1-\xb^{\vb_2})-(1-\xb^{\vb_1})=(\xb^{\vb_2}-\xb^{\vb_1})=
	\xb^{\vb_2}(1-\xb^{\vb_1-\vb_2}).
	\]
	This shows that with $1-\xb^{\vb_1}$ and $1-\xb^{\vb_2}$, also
	$(1-\xb^{\vb_1-\vb_2})\in S_\xb$,  since $\xb^{\vb_2}$ is a unit in $S_{\xb}$. Similarly, one sees that $(1-\xb^{\vb_1+\vb_2})\in S_\xb$. Hence the integer vectors $\vb$,  which span $V_I$,  form an abelian subgroup $G$ of $\ZZ^n$.  Any abelian subgroup of $\ZZ^n$ is free. Let $\vb_1,\ldots,\vb_r$
	be a basis of $G$. Then this basis is also a  $\QQ$-basis of $V_I$, and
	\[
	IS_\xb =(1-\xb^{\vb_1},\ldots, 1-\xb^{\vb_r}).
	\]
	Now, we apply Krull's generalized principle ideal theorem, to deduce
	that $\height IS_\xb \leq r=\dim_\QQ V_I$, as desired.

	For the second inequality we notice that $\height IS_\xb\leq \bigheight I$. Thus it suffices to show that $\height (1-\xb^{\vb_1},\ldots, 1-\xb^{\vb_r})=\dim_\QQ V$.
	
	Observe that $S_\xb$ can be identified with the group ring
	$K[\ZZ^n]$, whose $K$-basis  consists of all monomials $\xb^{\ab}$ with $\ab\in \ZZ^n$. By the elementary divisor theorem there exists a basis $\eb_1,\ldots, \eb_n$ of $\ZZ^n$ and positive integers $a_1,\ldots,a_r$ such that $\vb_i=a_i\eb_i$ for $i=1,\ldots,r$. In these coordinates
	\[
	IS_\xb= (1-x_1^{a_1},\ldots,  1-x_r^{a_r})S_\xb.
	\]
	Now, consider the ideal $J=(1-x_1^{a_1},\ldots, 1- x_r^{a_r})S_r$, where  $S_r=K[x_1,\ldots,x_r]$. Let $R=S_r/J$. Since $\dim R=0$, it
	follows that $R[x_{r+1},\ldots,x_n]$ is Cohen-Macaulay  of dimension $n-r$, and  since $R[x_{r+1},\ldots,x_n]\iso S/JS$, this implies that $JS$
	is an unmixed ideal of height $r$.  Because $JS$ is unmixed, we then have
	\[
	r=\height JS=\height JS_\xb= \height IS_\xb,
	\]
	as desired.
\end{proof}

{\em Proof of Theorem~\ref{heightprime}}. Note that $V_{I_\Pc}$ is a subspace of the $\QQ$-vector space $W:=\QQ^{V(\Pc)}$. We denote by $\vb_\ab \in W$ the vector, whose $\ab$'s component is $1$, while its other components are $0$. The set of vectors $\{\vb_{\ab}\:\; \ab\in V(\Pc)\}$ is the canonical basis of $W$.

For each inner interval $[\ab,\bb]$  of $\Pc$ with  anti-diagonals $\cb$ and $\db$ we define the vector
\[
\vb_{[\ab,\bb]}= \vb_\ab+\vb_\bb-\vb_\cb-\vb_\db.
\]
It follows from the definition of $V_{I_\Pc}$ that the vectors $\vb_{[\ab,\bb]}$ span  $V_{I_\Pc}$.

If $C=[\ab,\bb]$ is a cell  of $\Pc$, then  we write $\vb_C$ for the vector $\vb_{[\ab,\bb]}$ and claim that the vectors $\vb_C$ form a $\QQ$-basis of $V_{I_\Pc}$. Together with
Theorem~\ref{linearalgebra} this claim implies  the desired conclusion.

If $[\ab, \bb]$ is an arbitrary inner interval of $\Pc$, then it is readily seen that
\[
\vb_{[\ab,\bb]}= \sum_C \vb_C,
\]
where the sum is taken over all cells in $[\ab, \bb]$.  This shows that the vectors $\vb_C$ generate $V_{I_\Pc}$.

It remains to be shown that the set of vectors $\vb_C$ with $C$ a cell of $\Pc$  is linearly independent. For this purpose  we choose any total  order on $\ZZ^2$,   extending  the partial order
$\leq $ on $\ZZ^2$ which is defined by componentwise comparison. We set $\vb_{\ab}\leq \vb_{\bb}$ when $\ab\leq \bb$. Then for any cell $C=[\ab, \bb]$,  the leading  vector in the expression of $\vb_C$  is $\vb_\bb$. Since the leading vectors of all the vectors $\vb_C$  are pairwise distinct, it follows that the vectors $\vb_C$  are linearly independent.
\qed

\section{The coordinate ring of cell ideals with isolated singularity}

Let $I=(f_1, \ldots,f_m)$ be an ideal in $S$, and let \[
A=(\partial f_i/\partial x_j)_{i=1,\ldots,m \atop j=1,\ldots,n}
\]
be the corresponding Jacobian matrix. Let $h$ be the height of $I$. 
The \textit{Jacobian ideal} of the ring $R=S/I$ is the ideal $J\subset R$ generated by the $h\times h$-minors of $A$. When $K$ is a perfect field, the ideal $J$ defines the singular locus of $R$. In other words, $R_P$ is not regular for $P\in \Spec(R)$ if and only if $J\subseteq P$, see
~\cite[Corollary~16.20]{Ei}.

In the following result we investigate when the ring $K[\Pc]$ has an isolated singularity.

\begin{Theorem}
\label{isolated}
Let $K$ be a perfect field, and let $\Pc$ be a finite set of  cells such that $I_\Pc\subset S$  is a prime ideal. Then $S/I_\Pc$ has an isolated singularity if and only if  $\Pc$ is an inner interval.
\end{Theorem}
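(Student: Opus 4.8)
The plan is to deduce everything from Proposition~\ref{jacobianideal}. Since $I_\Pc$ is prime, $K[\Pc]=S/I_\Pc$ is a toric $K$-algebra; concretely it is the toric ring $K[s_at_b\:\;(a,b)\in V(\Pc)]$, where the $s_a$ are indexed by the columns and the $t_b$ by the rows occurring in $\Pc$, and $\overline{x}_{(a,b)}=s_at_b$ is the image of $x_{(a,b)}$. By Theorem~\ref{heightprime} the ideal $I_\Pc$ is unmixed of height $c$ (being prime), so Theorem~\ref{linearalgebra} gives $h=\dim_\QQ V_{I_\Pc}=c$, and I may apply Proposition~\ref{jacobianideal} with this value of $h$. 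Writing $\mm=(\overline{x}_{\ab}\:\;\ab\in V(\Pc))$ for the irrelevant maximal ideal, the ring $S/I_\Pc$ has an isolated singularity exactly when its singular locus $V(J)$ equals $\{\mm\}$, i.e. when $\sqrt{J}=\mm$, for the monomial ideal $J$ of Proposition~\ref{jacobianideal}. Thus the statement reduces to deciding when $\overline{x}_v\in\sqrt{J}$ for every vertex $v\in V(\Pc)$.

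For the implication ``inner interval $\Rightarrow$ isolated singularity'' I would argue directly rather than through $J$: if $\Pc$ is the inner interval $[\ab,\bb]$, then $I_\Pc$ is precisely the ideal of $2$-minors of the generic matrix $(x_{(a,b)})$, so $K[\Pc]$ is the cone over the Segre variety $\PP^p\times\PP^q$, whose only singular point is the vertex. This gives the isolated singularity while avoiding the delicate analysis of $J$.

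For the converse I would analyse $\sqrt{J}$ via Proposition~\ref{jacobianideal} and Lemma~\ref{tminors}, taking the inner $2$-minors as the generators $f_{\vb_i}$. Each generator of $J$ then has the form $m_{C,D}=\prod_{[\ab,\bb]\in C}\overline{x}_\ab\overline{x}_\bb\big/\prod_{w\in D}\overline{x}_w$, indexed by a set $C$ of $c$ inner intervals and a set $D$ of $c$ vertices with $\Delta_{C,D}(H)\neq 0$. Working in the grading by $s$- and $t$-degrees, a monomial of $K[\Pc]$ divides a power of $\overline{x}_v=s_{i_0}t_{j_0}$ only if it is itself a power of $\overline{x}_v$, and comparing total degrees forces that power to be $\overline{x}_v^{\,c}$. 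Hence $\overline{x}_v\in\sqrt{J}$ if and only if some admissible pair yields $m_{C,D}=\overline{x}_v^{\,c}$; a multidegree count in the $s_a$ and $t_b$ then shows this is possible only when every interval of $C$ has $v$ among its four corners. Since $\Delta_{C,D}(H)\neq 0$ forces the rows $\{\vb_I\:\;I\in C\}$ to be a basis of $V_{I_\Pc}$, I obtain the clean criterion: $\overline{x}_v\in\sqrt{J}$ implies that the inner intervals having $v$ as a corner span $V_{I_\Pc}$.

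What remains is the combinatorial heart. Using the cell basis $\{\vb_C\}$ of $V_{I_\Pc}$ from the proof of Theorem~\ref{heightprime} and inclusion--exclusion on nested rectangles anchored at $v$, one checks that the $v$-anchored intervals span $V_{I_\Pc}$ if and only if, for every cell $C$, the smallest rectangle having $v$ as a corner and containing $C$ lies entirely in $\Pc$. Applying this at the two bottom corners $v=(0,0)$ and $v=(p,0)$ of the bounding box $[(0,0),(p,q)]$ of $\Pc$, the two spanning conditions say exactly that $\Pc$ is closed under passing to cells lying weakly down-left, respectively weakly down-right; taking a cell of maximal height and combining the two closures forces every cell of the bounding box into $\Pc$, i.e. $\Pc$ is an inner interval. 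Consequently, if $\Pc$ is not an inner interval, then at least one of $\overline{x}_{(0,0)},\overline{x}_{(p,0)}$ fails to lie in $\sqrt{J}$, whence $\sqrt{J}\neq\mm$ and $S/I_\Pc$ has no isolated singularity. \textbf{The main obstacle} I anticipate is justifying the passage from the abstract toric presentation to the explicit model $\overline{x}_{(a,b)}=s_at_b$ and, inside it, the divisibility step turning the radical condition into the transparent corner/spanning combinatorics; the rest is bookkeeping with multidegrees and inclusion--exclusion.
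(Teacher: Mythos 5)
Your proposal contains a genuine gap at exactly the point you flag as ``the main obstacle,'' and it is not a routine verification but the crux: the identification $K[\Pc]\iso K[s_at_b\:\; (a,b)\in V(\Pc)]$ does not follow from the primality of $I_\Pc$. Primality (via the Eisenbud--Sturmfels theory) only gives that $I_\Pc$ is \emph{some} lattice/toric ideal, and the inclusion $I_\Pc\subseteq J_\Pc:=\Ker(x_{(a,b)}\mapsto s_at_b)$ is automatic; but equality of these two primes requires $\height J_\Pc=c$, i.e.\ that the cycle space of the row--column bipartite graph of $\Pc$ is spanned by the $4$-cycles of the cells. This genuinely fails for collections of cells (for the square annulus of $8$ cells one has $c=8$ while the toric ideal $J_\Pc$ has height $9$), and whether primality of $I_\Pc$ forces $I_\Pc=J_\Pc$ is essentially the open ``prime polyomino'' problem the paper itself mentions. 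Since your entire converse direction --- the $(s,t)$-multidegree divisibility argument showing $\overline{x}_v\in\sqrt{J}$ iff $m_{C,D}=\overline{x}_v^{\,c}$ for some admissible pair, and the corner/spanning criterion extracted from it --- is carried out inside this concrete model, the argument does not go through as written. The paper avoids the model entirely: it observes that every entry of the Jacobian matrix of the \emph{given} presentation is $0$ or $\pm x_\ab$, so the Jacobian ideal is a monomial ideal generated in degree $h=c$ (using Theorem~\ref{heightprime} and the Jacobian criterion over a perfect field), and $u_\ab^{h}\in J$ forces each of the $h$ rows of a contributing minor to supply a vertex $\bb$ such that $\ab,\bb$ are diagonal or anti-diagonal corners of an inner interval; counting then gives $|\Bc_\ab|=h$ for all $\ab\in V(\Pc)$, with no toric realization needed.

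There is a second, smaller flaw in your combinatorial endgame: you apply the spanning criterion at the bounding-box corners $v=(0,0)$ and $v=(p,0)$, but when $\Pc$ is not an inner interval --- precisely the case under consideration --- such a corner may fail to lie in $V(\Pc)$, so $\overline{x}_v$ does not exist and the criterion is vacuous there. The paper's proof handles this by choosing a \emph{missing} corner $\cb$ of the bounding box and the vertices $\bb,\bb'$ of $\Pc$ closest to $\cb$ on the two axes, then exhibiting $\bb'+(1,1)\notin\Bc_\bb$ to get $|\Bc_\bb|<h$, contradicting the count above; your sketch would need an analogous relocation of the test vertex, and the ``one checks'' inclusion--exclusion lemma would need a careful proof covering all four anchoring orientations. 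Your forward direction, by contrast, is sound and matches the paper in substance: for an inner interval, $I_\Pc$ is literally the ideal of $2$-minors of a generic matrix, and the Segre-cone description (the paper cites \cite[Proposition 7.3.4]{BH}, and also gives a direct computation exhibiting the diagonal minor $x_\ab^h$) yields the isolated singularity.
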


\begin{proof} We set $u_\ab=x_\ab\mod I_\Pc$ for all $\ab\in V(\Pc)$.  Let $J\subset R$ be the Jacobian ideal of  $R=S/I_\Pc$. By ~\cite[Corollary~16.20]{Ei} the assumption on $K$ guarantees that the  $K$-algebra $R$ has an isolated singularity if and only $\dim R/J=0$. The latter  is the case if and only if  suitable powers of the $K$-algebra generators  $u_\ab$ of $R$ belong to $J$.

Let us first assume that $\P$ is the inner interval $[\ab,\bb]$. The desired result follows in this case from known results about determinantal ideals, and the fact that ideal of $2$-minors of an inner interval is a determinantal ideal. 

Conversely, assume  that $R=S/I_\Pc$ has an isolated singularity. Let $h$ denotes
the number of cells in $\Pc$. Since the entries of the Jacobian matrix are of the form $0$ or
$\pm x_{\ab}$,
it follows that the Jacobian ideal is generated by
monomials of degree $h$ formed by  the generators $u_\ab$ of $R$.
Thus,  if $R$ has an isolated singularity, it is required  that for each $\ab\in V(\Pc)$,  there exists $k\geq h$  such that  $u_{\ab}^k\in J$, and this is the case if and only if $u_{\ab}^h\in J$.

Let $\ab\in V(\Pc)$.  Then $\pm x_{\ab}$ appears as an entry of the Jacobian matrix, if and only if there exists $\bb\in V(\Pc)$ such that $\ab$ and $\bb$ are the diagonal or anti-diagonal corners of an inner interval $D$ of $\Pc$.
Let $\Bc_\ab$ be the set of such elements $\bb$. Thus, if  $u_\ab^h$ appears as a monomial  generator of the Jacobian ideal $J$, then there should exists at least $h$ such elements $\bb$ so that $\ab$ and $\bb$ are the diagonal or anti-diagonal corners of an inner interval of $\Pc$. Hence, $h\leq |\Bc_\ab|$.

For each  $\bb\in \Bc_\ab$ there exists  a unique cell $C_\bb\subseteq D$ for which $\bb$ is a corner of $D$.
\begin{figure}[h]
\begin{center}
\includegraphics[scale=0.9]{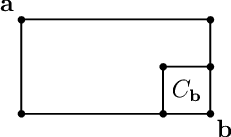}
\end{center}
\caption{Inside cell}
\label{insidecell}
\end{figure}

It follows that $|\Bc_\ab|\leq h$ (which is the number of cells of $\Pc$). Thus we have shown that $|\Bc_\ab|=h$ for all $\ab\in V(\Pc)$.  Assume that $\Pc$ is not an interval.  We claim that in this case there exists
$\ab \in \Pc$ such that $|\Bc_\ab|<h$, which then  leads to a contradiction.

Proof of the claim: choose $\ab\in V(\Pc)$, and take the subset $\{\bb_1,\ldots \bb_r\}$ of the elements in $\Bc_\ab$ for which the inner interval $I_j$ with corners $\ab$ and $\bb_j$ (as displayed in Figure~\ref{bi}) is maximal in the sense that if $\bb\in\Bc_\ab$, then the inner interval with corners $\ab$ and $\bb$ is contained in one of the intervals $I_j$.
\begin{figure}[h]
\begin{center}
\includegraphics[scale=0.9]{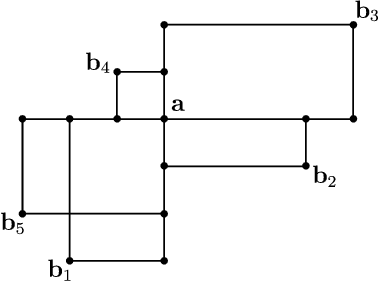}
\end{center}
\caption{}
\label{bi}
\end{figure}
Since $|\Bc_\ab|=h$ and since the cells $C_\bb$ are pairwise distinct,  and since they are  cells of $\Union_{j=1}^rI_j$, it follows that  $\Union_{j=1}^rI_j$ contains $h$ cells. By Theorem~\ref{heightprime}, $\Pc$ has exactly $h$ cells. Hence we see that $\Pc$ is equal to the set of the cells of $\Union_{j=1}^rI_j$. Let $[\cb,\db]$ be the smallest interval containing $\Pc$, see Figure~\ref{bounded}.

\begin{figure}[h]
\begin{center}
\includegraphics[scale=0.9]{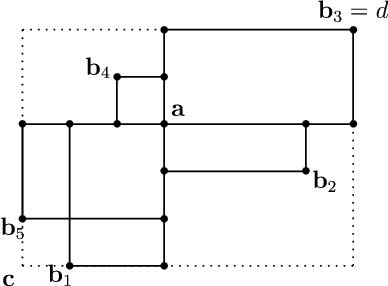}
\end{center}
\caption{}
\label{bounded}
\end{figure}

Since we assume that $\Pc$ is not an interval, it follows that not all corners of   $[\cb,\db]$ belong to $V(\Pc)$. We may assume that $\cb\not\in V(\Pc)$,  and in order to simplify our discussion we may further assume that $\cb=(0,0)$. Let $\bb$ be the smallest element on the $x$-axis and $\bb'$ be the smalest element on the $y$-axis  which belongs to $V(\Pc)$. In our picture these are the elements $\bb=\bb_1$ and $\bb'=\bb_5$.  Then  $\bb'+(1,1)\not \in \Bc_\bb$, which implies that $ |\Bc_\bb|<h$. This proves the claim and completes the proof of the theorem.
\end{proof}

{}
\end{document}